\newtheorem{theorem}{Theorem}[section]
\newtheorem{lemma}[theorem]{Lemma}
\theoremstyle{definition}
\theoremstyle{remark}
\newtheorem{remark}[theorem]{Remark}
\numberwithin{equation}{subsection}
\theoremstyle{plain}
\newtheorem{cor}{Corollary}
\def\F{\mathbb F}
\def\V{\mathrm V}
\def\t{\mathfrak t}
\def\E{\mathbb E}
\def\F{\mathrm F}
\def\E{\mathrm E}
\def\char{\rm{char}}
\def\U{\rm U}
\newcommand{\secref}[1]{Section~\ref{#1}}
\newcommand{\thmref}[1]{Theorem~\ref{#1}}
\newcommand{\lemref}[1]{Lemma~\ref{#1}}
\newcommand{\eqnref}[1]{~{\textrm(\ref{#1})}}
\begin{document}
\title[Conjugate real classes in general linear group]{Conjugate Real Classes in General Linear Groups} 
\author{Krishnendu Gongopadhyay}
\address{Indian Institute of Science Education and Research (IISER) Mohali, Sector 81, SAS Nagar, Punjab 140306, India}
\email{krishnendu@iisermohali.ac.in, krishnendug@gmail.com}
\author{Sudip Mazumder}
\address{Department of Mathematics, Jadavpur University, Jadavpur, Kolkata 700032 }
\email{topologysudip@gmail.com}
\author{Sujit Kumar Sardar}
\address{Department of Mathematics, Jadavpur University, Jadavpur, Kolkata 700032 }
\email{sksardarjumath@gmail.com}
\date{\today}
\subjclass[2010]{Primary 20E45; Secondary  20G15, 15A04}
\keywords{real elements, general linear group}
\begin{abstract}

Let $\F$ be a field with a non-trivial involution $c: \alpha \mapsto \alpha^c$. An element $g \in {\rm GL}_n(\F)$ is called $c$-real if it is conjugate to $(g^c)^{-1}$.  We prove that for $n \geq 2$, $g \in {\rm GL}_n(\F)$ is $c$-real if and only if it has a representation in some unitary group of degree $n$ over $\F$. 
\end{abstract}
\maketitle

\section{Introduction}
Let $\F$ be a (commutative) field.  
Let $c: \alpha \mapsto \alpha^c$ be a fixed \emph{involution}  on $\F$, i.e.  $c$ is an automorphism of $\F$ satisfying $(\alpha^{c})^c=\alpha$ for all $\alpha$ in $\F$. The involution is called \emph{non-trivial} if it is different from the identity automorphism.  Let $\V$ be a finite dimensional vector space over $\F$ of dimension at least $2$.

Recall that a $c$-sesquilinear form on $\V$ is a bi-additive map $\sigma: \V \times \V \to \F$ such that for all $\alpha$, $\beta \in \F$, and $v, w \in \V$, $\sigma(\alpha v, \beta w)=\alpha \beta^c \sigma(v, w)$. A  $c$-sesquilinear form is called $c$-\emph{hermitian} if for all $v, w \in V$, 
$\sigma( v, w)=\sigma( w, v)^c$.   The isometry group consisting of linear transformations that preserve a $c$-hermitian form is called a \emph{unitary group}.

\medskip An element $g$ in an abstract group $G$ is called \emph{real} or \emph{reversible} if $g$ is conjugate to $g^{-1}$ in $G$. Real elements appear naturally in representation theory, geometry and dynamics. There have been an ongoing activity to understand real elements from several point of views, for example see the recent articles,  \cite{asv}, \cite{dolfi}, \cite{gs}, \cite{g}, \cite{gp}, \cite{hl},  \cite{jnk}, \cite{kk},  \cite{ks}, \cite{or},  \cite{st}, \cite{tz}, \cite{r}, \cite{vg}. For an up to date exposition of real elements from different point of views, we refer to the recent monograph  \cite{osho}. 

\medskip The notion of real elements has a natural extension to linear groups over fields with involutions as follows. Let $\V$ has dimension $n$ over $\F$.  Let ${\rm GL}_n(\F)$ be the general linear group, i.e. group of all invertible linear maps on $\V$. Let $G$ be a $c$-invariant linear subgroup of ${\rm GL}_n(\F)$. For an element $g=(a_{ij})$ in $G$, let $g^c=(a^c_{ij})$. An element $g$ in $G$ is called \emph{conjugate  real} or, simply \emph{$c$-real}  if $g$ is conjugate to $(g^c)^{-1}$ in $G$. When $c$ is identity, it matches with the notion of reality in groups. It would be curious to investigate $c$-reality in several classes of linear groups over a field with involution and,  to compare it with the notion of reality. Some notion of twisted reality, mostly transpose-reality,  is implicit in some recent works related to group representations, for example see \cite{vinroot1}, \cite{rs1}.  Investigation of the notion of $c$-reality seems missing in the literature. We consider $c$-reality in the general linear group ${\rm GL}_n(\F)$ in this paper.

\medskip The $c$-reality of a linear map has close connection with the \emph{self-duality} of its invariant factors.  Let $\overline \F$ be the algebraic closure of $\F$.
Let $f(x)=\sum_{i=0}^d a_i x^i$, $a_d=1$, be a monic polynomial of degree $d$ over $\F$ such that $0$, $1$ or $-1$ are not its roots. 
The \emph{dual} of $f(x)$ is defined to be the polynomial $f^{\ast}(x)=(f(0)^c)^{-1}x^d f^c(x^{-1})$, where $f^c(x)=\sum_{i=0}^d a_i^c x^i$.  Thus,  $f^{\ast}(x)=\frac{1}{a_0^c} \sum_{i=0}^d a_{d-i}^c x^{i}$. In other  words, if $\alpha$ in $\overline \F$ is a root of $f(x)$ with multiplicity $k$, then $(\alpha^c)^{-1}$ is a root of $f^{\ast}(x)$ with the same multiplicity. The polynomial $f(x)$ is said to be \emph{self-dual} if  $f(x)=f^{\ast}(x)$. For notational convenience, we shall slightly extend this notion and will call a polynomial $g(x)$ \emph{self-dual} if $g(x)$ is either a power of $x \pm 1$, or self-dual in the above sense. 

\medskip Our main theorem is the following. 

\begin{theorem}\label{mainthm} Let $\F$ be a field with a non-trivial involution $c$. Let  $g$ be an element in ${\rm GL}_n(\F)$, $n \geq 2$. Then $g$ is $c$-real if and only if it has a unitary representation of dimension $n$. 
\end{theorem}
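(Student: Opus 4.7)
For the direction ``unitary $\Rightarrow$ $c$-real'' (the easy half), suppose $g$ lies in the isometry group of a non-degenerate $c$-hermitian form with Gram matrix $H$. Writing the invariance condition in coordinates yields $g^{T} H g^{c} = H$, so $g^{T} = H(g^{c})^{-1} H^{-1}$. Since every element of ${\rm GL}_{n}(\F)$ is conjugate to its own transpose (a standard consequence of rational canonical form), $g$ is conjugate to $(g^{c})^{-1}$, hence $c$-real.

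For the converse, I would first translate $c$-reality into a statement about invariant factors. A direct computation shows that the elementary divisors of $(g^{c})^{-1}$ are the duals $f^{*}$ of those of $g$, so $g$ being conjugate to $(g^{c})^{-1}$ is equivalent to the multiset of elementary divisors of $g$ being stable under the involution $f \mapsto f^{*}$. I would then split this multiset into self-dual blocks $f^{k}$ (where $f = f^{*}$) and matched pairs $\{f^{k}, (f^{*})^{k}\}$ with $f \neq f^{*}$, and decompose $\V$ into the corresponding $g$-stable summands. It then suffices to construct a $g$-invariant non-degenerate $c$-hermitian form on each summand; the orthogonal direct sum of these will provide the required unitary representation on $\V$.

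On a matched-pair summand $V \oplus V'$ with $V \cong \F[x]/(f^{k})$ and $V' \cong \F[x]/((f^{*})^{k})$, the approach is hyperbolic: declare $V$ and $V'$ to be totally isotropic and couple them through the canonical $g$-invariant $c$-sesquilinear pairing $V \times V' \to \F$ coming from the duality of these cyclic modules. The standard hyperbolic recipe then produces a non-degenerate, $c$-hermitian, $g$-invariant form, so this case is essentially bookkeeping.

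The main obstacle is the self-dual case: building an invariant non-degenerate $c$-hermitian form on a single cyclic module $\F[x]/(f^{k})$ with $f = f^{*}$. I plan to construct such a form explicitly by fixing a cyclic generator $v$ and defining $\sigma(p(x)\,v,\, q(x)\,v)$ via a trace-like pairing in $\F[x]/(f^{k})$ built from the coefficients of $f$. The relation $f = f^{*}$ should supply precisely the algebraic identity needed to verify both the hermitian symmetry $\sigma(u,w) = \sigma(w,u)^{c}$ and the invariance $\sigma(g u, g w) = \sigma(u,w)$. One caveat I expect to address carefully is the case where $f$ is a power of $x \pm 1$, which the excerpt singles out in the extended definition of self-duality and which will probably require a small separate argument. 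Once this self-dual case is in hand, assembling the forms on all summands yields a $c$-hermitian form on $\V$ whose isometry group is a unitary group containing $g$, completing the proof.
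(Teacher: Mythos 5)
Your skeleton is essentially the paper's: reduce $c$-reality to the statement that the multiset of elementary divisors is stable under $f\mapsto f^{\ast}$ with multiplicities (this is Lemma~\ref{lem3}), decompose $\V$ into self-dual cyclic blocks and matched dual pairs as in \eqnref{x}, treat the dual pairs hyperbolically, and reduce everything to producing a $T$-invariant non-degenerate $c$-hermitian form on a single self-dual cyclic block. Your ``unitary $\Rightarrow$ $c$-real'' argument, via $g^{t}=H(g^{c})^{-1}H^{-1}$ together with the fact that a matrix is conjugate to its transpose over $\F$, is in fact more self-contained than the paper's route (which obtains that direction from the necessity half of Theorem~\ref{mainthm1}, quoted from Wall and Sergeichuk), and the matched-pair case is indeed routine.

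The genuine gap is that the one hard step --- the self-dual cyclic block --- is left as a plan, and that step is precisely the technical core of the paper (Lemma~\ref{lem1}, Lemma~\ref{lem2}, Theorem~\ref{mainth}). Two concrete points. First, for $f=f^{\ast}$ not a power of $x\pm1$, your ``trace-like pairing'' on $\F[x]/(f^{k})$ must be checked to be simultaneously invariant, hermitian and non-degenerate, and non-degeneracy is not automatic: the naive trace functional vanishes on the radical, so for $k>1$ the trace form is degenerate, and one must instead choose a linear functional whose kernel misses the socle and which intertwines the involution $\alpha\mapsto\alpha^{c}$, $x\mapsto x^{-1}$ (well-defined only because $f=f^{\ast}$); arranging both properties, especially in characteristic two where one cannot symmetrize by averaging, requires an actual argument --- the paper instead computes the full space of invariant sesquilinear forms for the companion matrix and exhibits an invertible hermitian one. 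Second, the blocks $(x-1)^{m}$ are not ``a small separate argument'': in characteristic two this is exactly the new content of Lemma~\ref{lem2}, the construction there genuinely uses the non-triviality of $c$ through an element $w$ with $1+w+w^{c}=0$, and when $c$ is trivial there is a real obstruction (condition (ii) of Theorem~\ref{mainth}: odd $m$ forces even multiplicity), so any construction that does not invoke $c\neq\mathrm{id}$ at this point cannot succeed. Until these two constructions are carried out, the implication ``$c$-real $\Rightarrow$ unitary'' is not established.
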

We prove this theorem in \secref{mpr}. In \secref{if2}, we have proved a result related to the existence of invariant form under a linear map. This result is crucial for the proof of the main theorem,  and is of independent interest as well. 

\section{Existence of Invariant Forms}\label{if2}
In this section we investigate the following problem that has seen some attention in the literature: {\it given an invertible linear map $T: \V \to \V$, when does the vector space $\V$ over $\F$ admit a $T$-invariant non-degenerate $c$-hermitian  form}? The solution to this problem will be crucial in the proof of \thmref{mainthm}.

\medskip Sergeichuk \cite{s1, s2, s3}  solved the above problem assuming that the characteristic of $\F$ is  different from two. Using a different approach, Gongopadhyay and Kulkarni \cite{gk} obtained conditions for an invertible linear map to admit an invariant non-degenerate quadratic and symplectic form assuming that the underlying field is of large characteristic and, this was extended in \cite{gm} over fields of characteristic different from two to derive conditions for an invertible linear map to admit an invariant non-degenerate $c$-hermitian form. All these works assumed that the characteristic of the underlying field is different from two.  When $c$ is trivial, the characteristic two case has been addressed by de Seguins Pazzis \cite{dsp}.   In this section we work out the remaining case of non-trivial $c$ in characteristic two. 

  \medskip Recall that a $T$-invariant subspace of $\V$ is said to be \emph{indecomposable} with respect to $T$, or simply \emph{$T$-indecomposable} if it can not be expressed as a direct sum of  proper $T$-invariant subspaces.  If $\V$ is $T$-indecomposable, $T$ is called \emph{cyclic}. It is well-known from the structure theory of linear operators that $\V$ can be written as a direct sum $\V=\bigoplus_{i=1}^m \V_i$, where each $\V_i$ is $T$-indecomposable for $i=1,2, \ldots, m$, and each pair $(\V_i, T|_{\V_i})$ is  isomorphic to $(\F[x]/(p(x)^k), \mu_x)$, where $p(x)$ is an irreducible monic factor of the minimal polynomial of $T$, and $\mu_x$ is the operator 
$[u(x)] \mapsto [xu(x)]$, for eg. see \cite{jac, kul}.  Such $p(x)^k$ is an \emph{elementary divisor} of $T$. If $p(x)^k$ occurs $d$ times in the decomposition, we call $d$ the \emph{multiplicity} of the elementary divisor $p(x)^k$.

\medskip 
With the notions as above, we prove the following result in this section that will be used in the proof of the main theorem. 

\begin{theorem} \label{mainthm1}
Let $\F$ be a field with a non-trivial involution $c$.  
 Then $\V$ admits a $T$-invariant non-degenerate $c$-hermitian form if and only if  an elementary divisor of $T$ is either self-dual, or its dual is also an elementary divisor with the same multiplicity.
\end{theorem}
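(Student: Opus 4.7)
The plan is to prove the two implications separately, using the primary decomposition $\V = \bigoplus \V_i$ of $\V$ under $T$ recalled just before the statement. The new technical difficulty compared with the existing literature is that in characteristic two one cannot symmetrize bilinear forms by $\tfrac{1}{2}$; I will compensate by exploiting an element $\lambda \in \F$ with $\lambda + \lambda^c = 1$, which exists because non-triviality of $c$ forces $\F/\F^c$ to be a separable Galois extension of degree two, whence the trace map $\F \to \F^c$ is surjective.

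For the forward direction, suppose $\sigma$ is $T$-invariant, non-degenerate, and $c$-hermitian. Fix a basis of $\V$ and let $M$, $A$ be the matrices of $T$ and $\sigma$. Hermiticity reads $A = (A^c)^t$ and invariance reads $M^t A M^c = A$, which combine to give $M^c = A^{-1}(M^t)^{-1} A$. Hence $M^c$ is similar to $(M^t)^{-1}$, which has the same elementary divisors as $M^{-1}$, so $M$ is similar to $(M^c)^{-1}$. Since the elementary divisors of $(T^c)^{-1}$ are precisely the duals $p^*(x)^k$ of those of $T$, the multiset of elementary divisors of $T$ must be stable under $p \mapsto p^*$, which is the asserted condition.

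For the reverse direction, I arrange the primary decomposition so that each block is either a single cyclic $T$-module $\F[x]/(p(x)^k)$ with $p = p^*$, or a pair $\F[x]/(p(x)^k) \oplus \F[x]/(p^*(x)^k)$ with $p \neq p^*$, and then take an orthogonal direct sum of forms built on each block. The non-self-dual pair case admits the standard hyperbolic treatment: let $\sigma$ vanish on each summand and pair them via an $\F$-linear isomorphism identifying the two summands as $c$-conjugate linear duals of each other, and then replace $\sigma$ by $\sigma + \sigma^\dagger$, where $\sigma^\dagger(u,v) = \sigma(v,u)^c$, to enforce the hermitian property; $T$-invariance and non-degeneracy are automatic from the construction.

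The genuine obstacle is the self-dual cyclic block $\F[x]/(p(x)^k)$ with $p = p^*$, which is exactly the case left open in characteristic two by the existing works. My plan is to first exhibit a non-degenerate $T$-invariant $\F$-bilinear pairing $\sigma_0$ on the block by taking $\sigma_0([u],[v])$ to be the coefficient of $x^{k \deg p - 1}$ in the polynomial $u(x)\tilde v(x) \bmod p(x)^k$, where $\tilde v$ is the image of $v$ under the natural involution on $\F[x]/(p(x)^k)$ induced by the identity $p = p^*$. Then, choosing $\lambda \in \F$ with $\lambda + \lambda^c = 1$, I set $\sigma([u],[v]) = \lambda \sigma_0([u],[v]) + \lambda^c \sigma_0([v],[u])^c$. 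The resulting form is visibly $c$-hermitian, inherits $T$-invariance from $\sigma_0$, and the crucial verification of non-degeneracy reduces to a direct computation in a Jordan basis for $T$ on the cyclic block; the unipotent subcase $p(x) = x+1$ is covered by the same formula.
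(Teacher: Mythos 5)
Your forward direction and your hyperbolic treatment of the non-self-dual pairs are fine and essentially standard. The gap is exactly at the point you call ``a direct computation'': non-degeneracy of the hermitized form on a self-dual cyclic block. Unwinding your construction, since $\tilde{\;}=\tau$ is a ring involution of $R=\F[x]/(p(x)^k)$ with $\tau(x)=x^{-1}$, one has $\sigma_0(v,u)^c=\ell_0(\tau(u\tau(v)))^c$ where $\ell_0$ is your top-coefficient functional, so your form is $\sigma(u,v)=\ell(u\,\tau(v))$ with $\ell=\lambda\ell_0+\lambda^c(\ell_0\circ\tau)^c$, and non-degeneracy is equivalent to $\ell$ not vanishing on the socle $(p^{k-1})/(p^k)$. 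This is not automatic, and for legitimate choices of $\lambda$ it fails outright: take $\F=\mathbb{Q}(i)$ with $c$ complex conjugation, $\lambda=\tfrac{1+i}{2}$ (so $\lambda+\lambda^c=1$), and the block $R=\F[x]/(x^2+i)$, where $p(x)=x^2+i$ is irreducible and self-dual. Here $x^{-1}=ix$, so $\tau(w_0+w_1x)=w_0^c+iw_1^c x$, hence $\ell(w)=\lambda w_1+\lambda^c(iw_1^c)^c=(\lambda-i\lambda^c)w_1=0$, i.e.\ your $\sigma$ is \emph{identically zero} on this block. So the crucial step cannot be dispatched by a routine Jordan-basis check; the real content is to show that \emph{some} choice of $\lambda$ (equivalently, some hermitian-compatible functional $\ell$ on $R$, meaning $\ell\circ\tau=c\circ\ell$) does not vanish on the socle, a choice that must be made block by block and whose existence uses the non-triviality of $c$ (e.g.\ every compatible functional is its own symmetrization, and if all compatible functionals killed the socle one would deduce $\alpha=\alpha^c$ for all $\alpha$). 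The same caveat applies to your parenthetical claim that the unipotent block is ``covered by the same formula.''

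For comparison, the paper only needs the characteristic-two case (it cites the earlier work for $\mathrm{char}\,\F\neq 2$) and handles the self-dual cyclic block by writing $T$ as a companion matrix, solving the linear system $(T^c)^tHT=H$ explicitly to parametrize all invariant sesquilinear forms, and then exhibiting a non-singular hermitian solution (Lemmas~\ref{lem1} and~\ref{lem2}, the latter using an element $w$ with $1+w+w^c=0$ for the unipotent indecomposable block). Your residue-functional construction is more conceptual and uniform in the characteristic, and with the missing existence argument supplied it would give a genuinely cleaner route; but as written it relocates rather than resolves the key non-degeneracy issue, and the specific uniform recipe you propose is false for some admissible $\lambda$.
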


When characteristic of $\F$ is different from two, the above theorem was proved in \cite{gm}. In this section,  we shall extend the proof to arbitrary characteristic. We shall prove the following result that combining with \cite[Theorem 1.1]{gm},  gives the above theorem. 
\begin{theorem}\label{mainth}
Let $\F$ be a field of characteristic 2,  and let $c: \alpha \mapsto \alpha^c$ be an involution on $\F$. 
 Then $\V$ admits a $T$-invariant non-degenerate $c$-hermitian form if and only if the following conditions hold. 

\begin{enumerate}
\item[(i)] An  elementary divisor of $T$ is either self-dual, or its dual is also an elementary divisor with the same multiplicity.

\item[(ii)]  If $c$ is identity and $(x-1)^m$ is an elementary divisor, then either $m$ is even, or if $m$ is odd, then multiplicity of the elementary divisor must be an even number. 
\end{enumerate}
\end{theorem}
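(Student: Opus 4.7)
The plan is to follow the strategy of \cite{gm} (which handled characteristic different from two) and adapt its constructions to the characteristic two setting. By the structure theorem, write $\V = \bigoplus_{i=1}^m \V_i$ with each pair $(\V_i, T|_{\V_i})$ isomorphic to $(\F[x]/(p_i(x)^{k_i}), \mu_x)$. A non-degenerate $T$-invariant $c$-hermitian form $\sigma$ induces a $c$-conjugate linear isomorphism $\V \to \V^{\ast}$ that intertwines $T$ with $(T^c)^{-1}$. Restricting this intertwining to cyclic blocks, if $\sigma(\V_i, \V_j) \neq 0$ then the module structure on the two sides forces $p_j(x)^{k_j} = (p_i(x)^{k_i})^{\ast}$. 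Grouping the blocks into duality orbits and counting multiplicities within each orbit then yields condition (i).

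For condition (ii), consider the isotypic component of $(x-1)^m$ when $c$ is the identity. On a single cyclic module $\F[x]/((x-1)^m)$ with basis $\{1, x-1, \ldots, (x-1)^{m-1}\}$, a direct computation shows that any $\mu_x$-invariant symmetric bilinear form is determined by one anti-diagonal entry and, when the characteristic is two with $m$ odd, is forced to be alternating. Such a form is therefore degenerate on a single block, so the blocks of odd size must appear in pairs, forcing the evenness claimed in (ii).

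For sufficiency, I would build the form block by block and take an orthogonal direct sum. Three templates suffice: (a) if $p(x)^k$ is not self-dual, put a hyperbolic pairing $((u, v), (u', v')) \mapsto \varphi(u, v') + \varphi(u', v)^c$ on $\F[x]/(p(x)^k) \oplus \F[x]/(p^{\ast}(x)^k)$ for a suitable $\F$-bilinear $\varphi$; (b) if $p(x)^k$ is self-dual and avoids the exceptional case of (ii), construct an invariant $c$-hermitian form on a single cyclic block by specifying its Gram matrix in terms of coefficients in $\F[x]/(p(x)^k)$, in the spirit of Sergeichuk \cite{s1, s2, s3}; (c) in the exceptional case of (ii), put a hyperbolic form on a pair of copies of $\F[x]/((x-1)^m)$.

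The main obstacle is verifying the $c$-hermitian property in case (b) when $\mathrm{char}\,\F = 2$ and $c$ is non-trivial, since the classical proofs rely on dividing by $2$ to symmetrise an arbitrary $T$-invariant sesquilinear form. I plan to replace $1/2$ by an element $\alpha \in \F$ satisfying $\alpha + \alpha^c = 1$ (such an $\alpha$ exists whenever $c$ is a non-trivial involution of a field of characteristic two: take any $\beta$ with $\beta^c \neq \beta$ and set $\alpha = \beta/(\beta + \beta^c)$) and then carry out the averaging $\sigma(v, w) = \alpha \tau(v, w) + \alpha^c \tau(w, v)^c$ starting from any $T$-invariant sesquilinear $\tau$ built on the cyclic block. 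The resulting $\sigma$ is automatically $c$-hermitian, and non-degeneracy can be arranged by choosing $\tau$ from a standard family indexed by the cyclic basis.
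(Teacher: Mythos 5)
Your overall skeleton is the same as the paper's: decompose $\V$ into $T$-indecomposable blocks, put a hyperbolic pairing on each pair of mutually dual blocks, and build an invariant form on each self-dual block, with your trace-one element $\alpha$ (satisfying $\alpha+\alpha^c=1$) playing exactly the role of the element $w$ with $1+w+w^c=0$ used in the paper. But two of your steps do not work as stated. For the necessity of (ii), knowing that a single block $\F[x]/((x-1)^m)$, $m$ odd, carries no non-degenerate invariant symmetric form does not let you conclude that such blocks ``must appear in pairs'': a non-degenerate form on a direct sum of $T$-invariant subspaces need not restrict non-degenerately to the summands, so evenness of the multiplicity requires an argument on suitable subquotients (this is the content of Wall/Sergeichuk/de Seguins Pazzis, and the paper handles the whole necessity direction by citing that literature). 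Moreover, the form on a single odd block is not literally forced to be alternating (the entry $a_{11}$ stays free); what the computation actually gives is that the constant anti-diagonal entry, which controls the determinant, has the shape $z+z^c$ and hence vanishes when $c$ is the identity in characteristic two. Similarly, in (i), $\sigma(\V_i,\V_j)\neq 0$ only forces the irreducible of the $j$-th block to be dual to that of the $i$-th, not $k_j=k_i$; the clean route is that $\sigma$ induces an $\F[x]$-module isomorphism of $\V$ with its $c$-twisted dual, whose elementary divisors are the duals of those of $\V$.

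The more serious gap is in sufficiency, case (b). The hermitianization $\sigma(v,u)=\alpha\tau(v,u)+\alpha^c\tau(u,v)^c$ of an invariant sesquilinear $\tau$ is indeed invariant, sesquilinear and $c$-hermitian, but it can be degenerate even when $\tau$ is non-degenerate, and ``non-degeneracy can be arranged'' is precisely the point that has to be proved; it is where all the work in the paper lies. The paper's route is to compute the full space of $T$-invariant sesquilinear forms on a cyclic block: the Gram matrix is an anti-triangular Hankel-type matrix determined by its first $[\frac{k+1}{2}]$ parameters; in the non-unipotent self-dual case one prescribes a row to be a vector $u$ with $p(T)^{d-1}u\neq 0$ to force non-degeneracy and only then imposes the hermitian symmetry on the parameters (\lemref{lem1}), while in the unipotent case the determinant is a power of the constant anti-diagonal entry, which for odd block size is constrained by $1+z+z^c=0$ and is exactly where the trace-one element must be inserted (\lemref{lem2}). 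Your averaging idea is compatible with this, but without an explicit parametrization (or some other certificate that a non-degenerate hermitian member of the family exists, e.g.\ that the anti-diagonal value can be made $\alpha+\alpha^c=1$), the construction on self-dual blocks --- and hence the sufficiency direction --- remains incomplete.
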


To prove \thmref{mainth}, we first prove \lemref{lem1}. When characteristic of $\F$ is different from two, a version of the lemma was proved by Sergeichuk \cite{s2, s3}, and an easy proof was sketched in \cite{gm}.  Our proof below is motivated by a computational idea following \cite{kt}, where it has been used to investigate dimensions of the spaces of bilinear forms.  

\begin{lemma}\label{lem1}
 Let $\V$ be a finite dimensional vector space over a field   $\F$. 
Let $T: \V \to \V$ be a non-unipotent, cyclic, self-dual linear transformation. Then there exists a $T$-invariant non-degenerate $c$-hermitian form on $\V$.
\end{lemma}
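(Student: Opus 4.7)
My plan is to convert the problem into one about $\F$-linear functionals on the commutative algebra $A := \F[x]/(f(x))$, where $f = p^k$ is the minimal polynomial of $T$, and then to produce the required functional via a semisimplicity argument for a suitable crossed-product algebra.

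\textbf{Setup and reduction.} Using cyclicity, I would identify $(\V, T)$ with $(A, \mu_x)$. Non-unipotence forces $p \neq x-1$, and combined with self-duality of $f = p^k$ this gives $p(0) \neq 0$ (so $x$ is a unit in $A$) and the self-duality of $p$ itself. I would then extend $c$ to a ring involution $\tau \colon A \to A$ by declaring $\tau(x) = x^{-1}$; the descent to $A$ uses the identity $\tau(f) = (\mathrm{unit}) \cdot f$ coming from self-duality, which keeps the ideal $(f)$ stable. For any $\F$-linear $\ell \colon A \to \F$, define $\sigma_\ell(u,v) := \ell(u \, \tau(v))$. A short verification shows $\sigma_\ell$ is always $c$-sesquilinear and $T$-invariant (the latter using $\tau(x) = x^{-1}$), is $c$-hermitian precisely when $\ell \circ \tau = c \circ \ell$, and is non-degenerate precisely when $\ell$ does not vanish on the socle $S := (p(x)^{k-1})$ of the local ring $A$ (the ``only if'' uses that $S$ is an ideal; the ``if'' uses that every non-zero ideal of $A$ contains $S$). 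The lemma thus reduces to producing a $\tau$-hermitian $\F$-linear functional on $A$ that is non-zero on $S$.

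\textbf{Constructing the functional.} Consider the crossed-product algebra $A' := \F \oplus \F \tau$ with $\tau^2 = 1$ and $\tau \lambda = \lambda^c \tau$. A direct computation shows its center is $\F^c$, and $\dim_{\F^c} A' = 4$. I claim $A' \cong M_2(\F^c)$ in every characteristic. When $\mathrm{char}\,\F \neq 2$, this is witnessed by the idempotent $(1+\tau)/2$. When $\mathrm{char}\,\F = 2$ and $c$ is non-trivial, I would use $e := \alpha(1+\tau)$, where $\alpha \in \F$ satisfies $\alpha + \alpha^c = 1$ (an Artin--Schreier element, available since $c$ is non-trivial); expanding $e^2$ and using $\tau \alpha = \alpha^c \tau$ gives $e^2 = \alpha(\alpha + \alpha^c)(1+\tau) = e$. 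Given the isomorphism $A' \cong M_2(\F^c)$, any $\F$-vector space $W$ with a compatible $\tau$-action becomes a semisimple $A'$-module, and the space of $\tau$-hermitian $\F$-linear functionals $W \to \F$ is identified with $\mathrm{Hom}_{A'}(W, \F)$, which has $\F^c$-dimension $\dim_\F W$. Since $S \subseteq A$ is a $\tau$-stable submodule, restriction of $\tau$-hermitian functionals from $A$ to $S$ is surjective (the complementary summand in $A$ contributes the kernel). As $S \neq 0$, the space of $\tau$-hermitian functionals on $S$ is non-zero, and any non-zero element lifts to the required $\ell$ on $A$.

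\textbf{Main difficulty.} The substantive new step, relative to the characteristic-$\neq 2$ treatment in \cite{gm}, is establishing that $A'$ is a matrix algebra in characteristic $2$. The standard averaging idempotent $(1+\tau)/2$ is unavailable, so one must exploit the non-triviality of $c$ to supply an Artin--Schreier element and form the idempotent $\alpha(1 + \tau)$. Once the matrix-algebra structure is in hand, the semisimplicity and extension arguments run as in the familiar case, and the remainder of the proof is bookkeeping about the correspondence between forms and functionals set up in the reduction step.
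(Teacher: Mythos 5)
Your proposal is correct in substance, but it proves the lemma by a genuinely different, more structural route than the paper. The paper works entirely with the companion matrix of $m_T(x)=p(x)^d$: it shows that any $T$-invariant sesquilinear form has a Toeplitz-type matrix $X$ determined by about $k$ free parameters, forces non-degeneracy by choosing the rows of $X$ to be the $T$-orbit of a vector $w$ with $p(T)^{d-1}w\neq 0$, and only at the end imposes the hermitian symmetry ($y_i=x_i^c$, $x_1=x_1^c$). You instead identify $(\V,T)$ with $(\F[x]/(f),\mu_x)$, transfer invariant forms to $\F$-linear functionals via $\sigma_\ell(u,v)=\ell(u\,\tau(v))$ with $\tau(x)=x^{-1}$ (a classical device in the Wall--Milnor--Sergeichuk tradition), and reduce the lemma to producing a $\tau$-hermitian functional not vanishing on the socle, which you get from semisimplicity of the crossed product $\F\oplus\F\tau\cong M_2(\E)$, $\E=\F^c$. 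Your reduction steps (descent of $\tau$ to $\F[x]/(f)$ via self-duality, the hermitian condition $\ell\circ\tau=c\circ\ell$, non-degeneracy equivalent to $\ell|_S\neq 0$) all check out. What your approach buys is conceptual clarity and a clean isolation of where non-triviality of $c$ enters in characteristic $2$ (the trace element $\alpha+\alpha^c=1$, which the paper's own identity $1+w+w^c=0$ supplies); what the paper's computation buys is uniformity, since it never uses non-triviality of $c$ and so also covers the symmetric case $c=\mathrm{id}$ in characteristic $2$, which your argument does not (there the algebra $\F[\tau]/(\tau^2-1)$ is not semisimple) -- a minor scope restriction, since the paper's applications of the lemma (Corollary 2.5 and the main theorem) assume $c$ non-trivial.

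Two small repairs you should make. First, the existence of the idempotent $\alpha(1+\tau)$ by itself does not yield $A'\cong M_2(\E)$; you either need to know $A'$ is central simple over $\E$ (standard for crossed products of a Galois extension with trivial cocycle) or, more directly, observe that $A'$ acts faithfully on $\F$ by $\lambda\mapsto$ multiplication and $\tau\mapsto c$, so Dedekind's independence of characters plus the count $\dim_\E A'=4=\dim_\E\mathrm{End}_\E(\F)$ gives $A'\cong\mathrm{End}_\E(\F)\cong M_2(\E)$ in every characteristic, making the idempotent unnecessary. Second, note that $\F/\E$ is a degree-two Galois (hence separable) extension, so the trace $\alpha\mapsto\alpha+\alpha^c$ is surjective; this is the correct justification for the element you call an Artin--Schreier element.
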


\begin{proof}  Suppose $\dim \V=k$. Since $T$ is cyclic, $\chi_{T}(x)=m_{T}(x)$. Let $m_{T}(x)=p(x)^d$ be the minimal polynomial of $T$, where $p(x)$ is irreducible over $\F$, $p(x) \neq x \pm 1$.  Let $m_T(x)=p(x)^d=\sum_{i=0}^{ k }d_ix^{i}$.  As $T$ is self-dual then $d_0d_0^c=1$, $d_i d_0^c=d^{c}_{ k -i}, ~ 1\leq i\leq  k -1$.  Further since $T$ is cyclic, there is a vector $v$ in $\V$ such that the $T$-orbit of $v$ spans $\V$, i. e. the set  
$$B=\{e_1 =v,~ e_2=Tv, \ldots,~ e_{ k } =T ^{ k -1}v\}$$
forms a basis of $\V$ and $T^{ k }v=-\sum_{i=0}^{k-1} d_{i}T^{i}v$.

 The linear transformation  of $T$ with respect to $\{e_1,e_2,\ldots, e_k\}$ is given by the matrix:  
 $$T=\begin{pmatrix}
0 & 0 & 0 & \ldots & 0 & -d_0 \\1& 0 & 0 & \ldots & 0  & -d_1\\ 0 &  1& 0& \ldots & 0 & - d_2 \\ \vdots &  & \ddots& \ddots &  & \vdots\\ 0 & 0 & 0 & \ldots & 0 & - d_{ k -2}\\0 & 0 & 0 & \ldots & 1 & - d_{ k -1} \end{pmatrix}.$$
Note that when characteristic of $\F$ is $2$, $d_i=-d_i$ above for all $i$. 

Suppose $H$ is a sesquilinear $T$-invariant form. Then, with respect to the basis $B$,  it is given by the matrix 
 $X=(H(e_{i},e_{j}))$. The 
$T$-invariance of $H$ is equivalent to  the relation $(T^c)^t H T=H$. 
Thus, 
$$(H(e_{i},e_{j}))=X=(T^{c})^{t}X T =H(Te_{i},Te_{j}).$$ 
Let $H(e_i, e_j)=h_{ij}$, then it follows that $h_{ij}=h_{(i+1)(j+1)}$ for all  $ i,j,~1\leq i,j\leq  k -1$. In view of this,  for simplicity of notation, let $H(e_{1},e_{i})=x_{i}, ~  1\leq i\leq  k $ and $H(e_{j},e_{1})=y_{j},~ 1\leq j \leq k $. Clearly, $x_{1}=y_{1}$. So,
\begin{equation} \label{hma}  X=\begin{pmatrix} x_{1} & x_{2} & x_{3} & \ldots & x_{ k }\\ y_{2} & x_{1} & x_{2} & \ldots & x_{ k -1} \\ \vdots & \vdots & \vdots& \vdots & \vdots \\ y_{ k } & y_{ k -1} & y_{ k -2} & \ldots &x_{1} \end{pmatrix}.\end{equation} 
Now for  $1\leq i\leq [\frac{k}{2}]$, we get \begin{eqnarray*}x_{ k -i+1}&=& H(e_{i+1},T^{ k }v)\\&= &H(e_{i+1},-\sum_{j=0}^{ k -1}d_{j}e_{j+1})
=-\sum _{j=0}^{ k -1}H(e_{i+1},e_{j+1})d_{j}^{c}\\
&=&-\sum_{j=0}^{i-1}H(e_{i+1},e_{j+1})d_{j}^{c}-\sum_{j=i}^{ k -1}H(e_{i+1},e_{j+1})d_{j}^{c}\\
&=&-\sum_{j=i}^{ k -1}H(e_{1},e_{j-i+1})d_{j}^{c}-\sum_{j=0}^{i-1}H(e_{i+1},e_{j+1})d_{j}^{c}\\
&=&-\sum_{j=i}^{ k -1}x_{j-i+1}d_{j}^{c}-\sum_{j=0}^{i-1}H(e_{i-j+1},e_{1})d_{j}^{c}.
\end{eqnarray*}

 We get similar expression for $y_{k-i+1}$, and we have,  for $1\leq i \leq [\frac{k}{2}]$,
 $$x_{ k -i+1}=-\bigg(\sum_{j=i}^{ k -1}x_{j-i+1}d_{j}^{c}+\sum_{j=0}^{i-1}y_{i-j+1}d_{j}^{c}\bigg),$$
$$y_{ k -i+1}=-\bigg(\sum _{j=i}^{ k -1}y_{j-i+1}d_{j}+\sum_{j=0}^{i-1}x_{i-j+1}d_{j}\bigg).$$

\medskip {\it Suppose $k$ is even: $k=2n$}. For $i=t \leq n $, using $d_j d_0^c=d^{c}_{ k -j}, ~ 1\leq j \leq  k -1$, the  above equations can be written as:  
$$x_{ 2n -t+1}=-\bigg(y_{t+1}d_{0}^{c}+\sum_{j=1}^{t-1}(y_{t-j+1}d_j^c+x_{ 2n-j-t+1}d_{j}d_0^c)+\sum_{j=t}^{n-1}(x_{j-t+1} d_j^c+x_{ 2n-j-t+1}d_{j}d_0^c)+x_{n-t+1} d_{n}^{c}\bigg).$$

$$y_{ 2n -t+1}=-\bigg(x_{t+1}d_{0}+\sum_{j=1}^{t-1}(x_{t-j+1}d_j+y_{ 2n -j-t+1}d_{j}^cd_0)+\sum_{j=t}^{n-1}(y_{j-t+1}d_j+y_{ 2n-j-t+1} d_{j}^cd_0)+y_{n-t+1} d_{n} \bigg).$$
Thus,  $x_{n+1}, \cdots x_{ 2n },~y_{n+1},\ldots,~y_{ 2n }$ are expressible in terms of $x_{1},\ldots,x_{n},~y_{2},\ldots,y_{n}$. 

\medskip {\it Suppose $k$ is odd: $k=2n-1$}. For $i=t \leq n-1$, using $d_j d_0^c=d^{c}_{ k -j}, ~ 1\leq j \leq  k -1$, the  above equations can be written as:
$$x_{ 2n -t}=-\bigg(y_{t+1}d_{0}^{c}+\sum_{j=1}^{t-1}(y_{t-j+1} d_j^c+x_{ 2n -j-t}d_{j} d_0^{c})+\sum_{j=t}^{n-1}(x_{j-t+1}d_j^c+x_{ 2n -j-t}d_{j}d_0^{c})\bigg);$$

$$y_{ 2n -t}=-\bigg(x_{t+1}d_{0}+\sum_{j=1}^{t-1}(x_{t-j+1}d_j+y_{ 2n -j-t}d_{j}^c d_0)+\sum_{j=t}^{n-1}(y_{j-t+1}d_j+y_{ 2n -j-t}d_{j}^c d_0)\bigg). $$
Thus,  $x_{n+1}, \cdots x_{ 2n-1},~y_{n+1},\ldots,~y_{ 2n-1 }$ are expressible in terms of $x_{1},\ldots$, $x_{n},~y_{2}$,$\ldots$,$y_{n}$. 
Thus the vector $(x_1, \ldots, x_t$, $y_2, \ldots, y_t)$, $t=[\frac{k+1}{2}]$,  completely determines $X$.

\medskip  Now, choose a vector $w =(w_1, \ldots, w_k) \in \V$ such that $p(T)^{d-1} w \neq 0$. Choose, $X$ as in \eqnref{hma} such that $w=(y_t, \ldots, y_2, x_1,  \ldots, x_{k-t+1})$ is the $t$-th row of $X$. Then $w, Tw, \ldots$, $T^{k-1}w$ are the rows of $X$. We claim that they must be linearly independent. If not, then assume $w$ is a linear combination of the other rows. This would imply that $p(T)^s w=0$ for $s<d$. This would give a contradiction. So, we can choose $X$ to be non-degenerate.   

Choose $y_i=  x_i^c$, $k \neq 1$,  and $x_1 =x_1^c$ in $X$, and we get the required $c$-hermitian form. 
\end{proof} 

  \begin{cor}\label{c1}
 Let $\V$ be a finite dimensional vector space over a field $\F$ with non-trivial involution $c$. 
Let $T: \V \to \V$ is a non-unipotent, cyclic, self-dual linear transformation. Then there exists a $T$-invariant non-degenerate $c$-hermitian form on $\V$. If $char(\F)\neq 2$, there also exists a $T$-invariant $c$-skew hermitian form. 
\end{cor}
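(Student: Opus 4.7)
The first assertion is essentially a direct application of \lemref{lem1}, since that lemma already produces a $T$-invariant non-degenerate $c$-hermitian form on $\V$ under precisely the hypothesis that $T$ is non-unipotent, cyclic, and self-dual (and the non-triviality of $c$ is not even needed for the hermitian conclusion). So the plan is simply to invoke \lemref{lem1} and call the resulting form $H$.

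For the second assertion, the key observation is that when $\mathrm{char}(\F)\neq 2$ and $c$ is non-trivial, we can always exhibit a purely ``imaginary'' scalar. Concretely, pick any $\beta\in\F$ with $\beta^c\neq\beta$ and set $\lambda=\beta-\beta^c$; then $\lambda\neq 0$ and $\lambda^c=-\lambda$. The plan is then to define $K(v,w)=\lambda H(v,w)$ and verify that $K$ is the desired form.

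Each property is a short check. Sesquilinearity: $K(\alpha v,\beta w)=\lambda\alpha\beta^c H(v,w)=\alpha\beta^c K(v,w)$. Skew-hermitianness: $K(w,v)^c=\lambda^c H(w,v)^c=-\lambda\, H(v,w)=-K(v,w)$, using $\lambda^c=-\lambda$ and the hermitian symmetry of $H$. The $T$-invariance of $K$ is inherited from that of $H$ since multiplication by a scalar commutes with the relation $H(Tv,Tw)=H(v,w)$, and non-degeneracy of $K$ is immediate because $\lambda\neq 0$.

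There is no real obstacle; the only point requiring attention is the existence of $\lambda$ with $\lambda^c=-\lambda$, which is precisely where both hypotheses (non-triviality of $c$ and $\mathrm{char}(\F)\neq 2$) are used, and which explains why the skew-hermitian part of the corollary fails in characteristic two.
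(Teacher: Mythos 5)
Your proposal is correct and follows essentially the paper's own route: the hermitian part is exactly \lemref{lem1}, and the skew-hermitian part rests on the existence of a nonzero $\lambda$ with $\lambda^c=-\lambda$, which is precisely the ingredient the paper invokes (your scaling $K=\lambda H$ just makes its one-line ``similar'' argument explicit). One small quibble: such a $\lambda$ exists whenever $c$ is non-trivial, regardless of characteristic; the hypothesis $\mathrm{char}(\F)\neq 2$ is needed so that ``skew-hermitian'' is genuinely distinct from ``hermitian,'' not for producing $\lambda$.
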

 \begin{proof}
 If $\char(\F) \neq 2$, the existence of  skew-hermitian form is similar noting that there always exists an element $w$ in $\F$ such that $w^c=-w$. 
\end{proof}

\subsection{Invariant form under unipotent linear maps}
Let $\F$ be a field with a non-trivial involution $c$. Then $\F$ is the quadratic extension of a field $$\E=\{x \in \F: \ c(x)=x\}.$$
 Let $\F=\E(w)$. If $\char(\F) \neq 2$, we can take $w^2=a \in \E$ and $w^c=-w$. If $\char(\F)=2$, then we can take $w^2+w \in \E$ and $1+w+w^c=0$. We prove it for the characteristic $2$  case as that is relevant to us.

 Let $\F=\E(w)$. Let $w^{c}= a+bw$, for $a$, $b$ in $\E$.  Clearly, $a$ cannot be zero, hence we can assume $a=1$. Then $(w^{c})^{c}=1+bw^{c}$ implies  $bw^{c}= 1+w$, and hence,  $w^{c}=b^{-1}+b^{-1}w$. Thus $b^{-1}=a$ and $b^{-1}=b$, that implies $1+w+w^{c}=0$. 

\begin{lemma}\label{lem2}
Let $\F$ be a field of characteristic $2$  with a non-trivial involution $c$.  Let $\V$ be an $n$-dimensional vector space of dimension $\geq 2$ over  $\F$.    Let $T: \V \to \V$ be a unipotent linear map with minimal polynomial $(x-1)^n$. Suppose $\V$ is  $T$-indecomposable. Then  $\V$ admits a $T$-invariant  
$c$-hermitian form over $\F$.
\end {lemma}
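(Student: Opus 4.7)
My plan is to follow the template of \lemref{lem1}: exploit the cyclic structure of $T$ to parametrize $T$-invariant sesquilinear forms, impose the hermitian condition, and choose the free parameters to make the form non-degenerate. Since $\V$ is $T$-indecomposable, $T$ is cyclic with minimal polynomial $(x+1)^n$ (because $-1=1$ in characteristic $2$). Fix a cyclic vector $v$ and form the basis $B=\{e_i=T^{i-1}v\}_{i=1}^n$. Expanding $(x+1)^n=\sum_{i=0}^n d_i x^i$ with $d_i=\binom{n}{i}\bmod 2$, we get $d_0=d_n=1$ and $d_i=d_{n-i}$, so $(x+1)^n$ is self-dual in the paper's extended sense.

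The companion matrix of $T$ has entries in $\F_2\subseteq\E$, so $T^c=T$, and the invariance condition $(T^c)^t X T=X$ for $X=(H(e_i,e_j))$ reduces to the same Toeplitz-type relations as in \lemref{lem1}: $h_{i+1,j+1}=h_{ij}$ for $i,j<n$, together with boundary relations obtained from the expansion $T e_n=\sum_{i=0}^{n-1} d_i e_{i+1}$. Parametrizing by $x_i=h_{1,i}$ and $y_j=h_{j,1}$ with $x_1=y_1$, and using the self-duality identities $d_i d_0^c=d_{n-i}^c$ and $d_0 d_0^c=1$, the matrix $X$ is determined by $x_1,\ldots,x_t,y_2,\ldots,y_t$ where $t=\lfloor(n+1)/2\rfloor$. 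Imposing hermiticity gives $x_1\in\E$ and $y_i=x_i^c$ for $i\geq 2$; self-duality of $(x+1)^n$ forces the remaining derived entries to satisfy $h_{ij}=h_{ji}^c$ automatically.

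For non-degeneracy I would mirror the closing step of \lemref{lem1}: pick a vector $\xi=(\xi_1,\ldots,\xi_n)\in\V$ with $(T+I)^{n-1}\xi\neq 0$, subject to the hermitian symmetry $\xi_{t-i+1}=\xi_{t+i-1}^c$ and $\xi_t\in\E$, and place $\xi$ as the $t$-th row of $X$. In the cyclic basis, $(T+I)\V$ is the kernel of the functional $\sum \xi_k e_k\mapsto\sum \xi_k$, so $\xi$ is cyclic precisely when $\sum \xi_k\neq 0$, and on the hermitian-symmetric $\E$-subspace this sum takes values in $\E$. The main obstacle is that cyclicity of $\xi$ alone does not guarantee that $X$ is invertible (already for $n=3$ the symmetric cyclic vector $\xi=e_2$ produces a singular $X$), so one must fine-tune $\xi$ using the element $w\in\F$ with $1+w+w^c=0$ introduced in the paragraph preceding the lemma: by placing $w$ and $w^c$ in symmetric coordinate pairs of $\xi$ one arranges the determinant of $X$—itself a polynomial in the free parameters—to be a non-zero element of $\E$. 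The resulting $X$ is then a $T$-invariant non-degenerate $c$-hermitian form on $\V$.
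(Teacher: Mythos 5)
Your setup (companion-basis parametrization as in \lemref{lem1}, the observation that $T^c=T$, reduction of an invariant sesquilinear form to the parameters $x_1,\ldots,x_t,y_2,\ldots,y_t$) is reasonable, and you have correctly located the real difficulty: the cyclic-vector trick that closes \lemref{lem1} genuinely fails for the unipotent block, and when $c$ is trivial and $n$ is odd \emph{every} invariant symmetric form on this block is degenerate (this is exactly condition (ii) of \thmref{mainth}), so any proof must exploit the non-triviality of $c$ via an element $w$ with $1+w+w^c=0$. But at precisely this point your argument stops being a proof: the sentence asserting that ``by placing $w$ and $w^c$ in symmetric coordinate pairs of $\xi$ one arranges the determinant of $X$ to be a non-zero element of $\E$'' is a restatement of the conclusion, not an argument. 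The determinant, as a polynomial in the free parameters, vanishes identically on the hermitian locus when $c$ is the identity and $n$ is odd, so one must exhibit an explicit choice and verify non-vanishing for every $n$; nothing in your write-up does this. (As a sanity check, for $n=3$ one finds $\det X=(x_2+x_2^c)^3$ on the hermitian locus, so non-degeneracy holds exactly when $x_2\notin\E$ --- this shows the statement is salvageable in your coordinates, but also that the needed computation is not the one you wrote.) The subsidiary claim that hermiticity of the derived entries is ``automatic'' should likewise be verified rather than asserted, since for the unipotent block the hermitian condition interacts with the recursion in a way it does not in \lemref{lem1}.

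The paper sidesteps the determinant problem by working in the Jordan basis (ones on the diagonal and subdiagonal) rather than the companion basis: there the invariance relations force $H$ to vanish below the anti-diagonal, so $\det H$ is, up to sign, the product of the anti-diagonal entries, and non-degeneracy reduces to making those entries non-zero. This is then done explicitly: for $n$ even all anti-diagonal entries can be taken equal to $1$; for $n$ odd the recursion together with hermiticity forces a relation of the shape $1+\lambda+\lambda^c=0$ for the entries adjacent to the central one, which is solvable precisely because $c$ is non-trivial (take $\lambda=w$), and the entries are then filled in with $w$, $w^c$ and $1$ consistently. To complete your companion-basis route you would need an analogous explicit computation (or a change of basis reducing to the anti-triangular picture); as it stands, the decisive non-degeneracy step --- which is the entire content of the lemma for odd $n$ --- is missing.
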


\begin{proof} 

Let $T$ be an unipotent linear map. Suppose the minimal polynomial of $T$ is $m_T(x)=(x-1)^n$. Without loss of generality we can assume that $T$ is of the form
\begin{equation}\label{t}
T=\begin{pmatrix} 1 & 0& 0& 0 & .... \ 0 & 0\\ 1 & 1 & 0& 0 & .... \ 0& 0 \\ 0 & 1 & 1 & 0 & ....\ 0& 0\\ &\ddots & \ddots  & \ddots    &\ddots& \\ 0 &0 & 0 & \ldots & 1&0\\ 0 & 0 & 0 & \ldots & 1& 1 \end{pmatrix}
\end{equation}
Suppose $T$ preserves a $c$-sesquilinear form $H$. In matrix form, let $H=(a_{ij})$.
Then,  $(T^{c})^{t} H T =H$.
This gives the following relations: For $1 \leq i \leq n-1$,
\begin{equation}\label{1}
 a_{i+1,n}=0=a_{n, i+1},
\end{equation}
\begin{equation}\label{2'}
a_{i,j} + a_{i,j+1}+a_{i+1,j}+a_{i+1,j+1}=a_{i,j},
\end{equation}
\begin{equation}\label{e2}
 \hbox{i.e. } \  a_{i,j+1}+a_{i+1,j}+a_{i+1,j+1}=0. \end{equation}
From the above 2  equations, we have for $1 \leq l \leq n-3$ and $l+2 \leq i \leq n-1$,
\begin{equation}\label{3}
a_{i,n-l}=0=a_{n-l,i}.
\end{equation}
This implies that $H$ is a triangular matrix of the form
\begin{equation}\label{can}
H= \begin{pmatrix} a_{1,1} & a_{1,2} & a_{1,3} & a_{1,4} & .... & a_{1, n-2} & a_{1,n-1} & a_{1,n}\\
a_{1,2} & a_{2,2} & a_{2,3} & a_{2,4} & .... & a_{2,n-2} & a_{2,n-1} & 0\\
a_{1,3} & a_{2,3} & a_{3,3} & a_{3,4} & .... & a_{3,n-2} & 0 & 0\\
\vdots& \vdots&\vdots  &  \vdots& .... &\vdots  & \vdots & \vdots \\
a_{1,n-1} & a_{2,n-2} & 0 & 0 & ....& 0 & 0 & 0 \\
a_{1,n} & 0 & 0 & 0 & ....& 0& 0 & 0 \end{pmatrix},
\end{equation}
where, 
\begin{equation*} \label{e2'} a_{i+1, j} + a_{i,  j+1} + a_{i+1, j+1}=0. 
\end{equation*}
In particular, it follows that $a_{j, n-j+1}=a_{j+1, n-j}$, where $1 \leq j \leq n-1$. Thus $a_{1, n}$ must be chosen non-zero for $H$ to be non-singular.   It also follows from this relation and \eqnref{e2} that $a_{l, n-l}=(-1)^{n-2l} a_{n-l, l}$, and since the characteristic of the base field is $2$, we have $a_{l, n-l}=a_{n-l, l}$. Thus $H=H^t$. For $H$ to be hermitian we shall further have $a_{i, j}=a_{j, i}^c$.

\medskip 
Case 1: Suppose $n$ is even, $n=2m$. Then  by \eqnref{e2}   one can find the following relations as follows:
\begin{eqnarray*} 
a_{2,2m-1}+a_{1,2m}+a_{2,2m}&=& 0\\
a_{3,2m-2}+a_{2,2m-1}+a_{3,2m-1}&=& 0 \\
 \vdots& \vdots& \vdots\\
a_{2,2m-1}^{c}+a_{2m,2}+a_{1,2m}^{c}&=&0. 
\end{eqnarray*}
It follows that  for $1 \leq j \leq n-1$, $a_{j, n-j+1}=a_{j+1, n-j}=a_{2,2m-1}$.
 Now expanding with respect to the final row the determinant of $H$ would be $(a_{2,2m-1})^{2m}$. Thus choosing $a_{2,2m-1}$ to be $1$, it provides a non-singular hermitian form. 

\medskip Case 2: Suppose $n=2m+1$. In this case using \eqnref{e2}, we have in particular 
$$a_{m+1, m+1}+a_{m-1, m+1}+a_{m+1, m-1}=0, \hbox{ i.e. }1+a_{m-1, m+1}+a_{m-1, m+1}^c=0.$$
So, we can choose $a_{k, n-k}=w$ and $a_{k+1, n-k-1}=w^c$, for $1 \leq k \leq m-1$. Next we choose $a_{k, n-k-1}=1$ and $a_{k-1, n-k}=w$ for $2 \leq k \leq m-1$, thus $a_{k, n-k}+a_{k-1, n-k}+a_{k, n-k-1}=1+w+w^c=0$, and so on. By doing this recursively in \eqnref{e2}, we get a non-singular hermitian matrix $H$.  
This completes the proof. 
\end{proof}

\subsection{Proof of \thmref{mainth}}\label{proof}  Suppose that the linear map $T$ admits an invariant non-degenerate hermitian form  $H$.
 Then the necessary condition follows from existing literatures, for example see \cite{wall, s1, s2}. 

\medskip Conversely, let  $\V $  be  a  vector  space  of  dim $  n\ge 2 $ over  the  field
$ \F $ and $ T : \V \longrightarrow \V $ an  invertible  map  such  that  the an
elementary  divisors of $ T$ is either   self-dual or, its dual is also an elementary divisor. For an elementary divisor $g(x)$, let $\V_g$ denote the $T$-indecomposable subspace  isomorphic to $\F[x]/(g(x))$.
From the theory of elementary divisors it follows that $\V$ has the decomposition, see \cite{wall, jac}, 
\begin{equation}\label{x}
\V=\bigoplus_{i=1}^{m_1}\V_{f_i} \oplus  \bigoplus_{j=1}^{m_2} \U_{g_i},
\end{equation} 
where,  for each $i=1,2,...,m_1$,   $f_i(x)$ is either self-dual with no root $1$, or $(x-1)^k$ and,  for each $j=1,2,...,m_2$, $\U_{g_i}=\V_{g_i} \oplus \V_{g_i^{\ast}}$,  $g_i(x)$, $g_i^{\ast}(x)$ are dual to each other,  $g_i(x)\neq g_i^{\ast}(x)$; note that $\dim \V_{g_i}=\dim \V_{g_i^{\ast}}$.  Here $\oplus$ denotes the direct sum. To prove the theorem it is sufficient to induce a $T$-invariant hermitian form on each of the summands. After we have \lemref{lem1} and \lemref{lem2}, the proof goes similarly as in the proof of \cite[Theorem 1.1]{gm} or \cite[Theorem 1.1]{gk}. 

\section{ Proof of \thmref{mainthm}}\label{mpr}
In this section, we shall prove the main theorem. We reformulate the statement of the main theorem as follows and will prove it in the rest of the section.
\begin{theorem} \label{thm1}
Let $\F$ be a field with a non-trivial  involution $c$. Let $T: \V \to \V$ be an invertible linear map. Then $\V$ admits a $T$-invariant non-degenerate $c$-hermitian form if and only if $T$ is $c$-real. 
\end{theorem}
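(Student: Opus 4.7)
The plan is to establish the equivalence by combining a short direct computation (for one direction) with \thmref{mainthm1} (for the other), in each case relating $c$-reality to the elementary divisor condition of \thmref{mainthm1}.

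For the direction that an invariant form implies $c$-reality, I would argue directly. Writing $T^{\ast} = (T^c)^t$ for the conjugate-transpose, the $T$-invariance of a non-degenerate hermitian matrix $H$ reads $T^{\ast} H T = H$, whence $T = H^{-1} ((T^c)^{-1})^t H$. Thus $T$ is conjugate in ${\rm GL}_n(\F)$ to the transpose $((T^c)^{-1})^t$. Since every invertible matrix over a field is similar to its transpose, $((T^c)^{-1})^t$ is in turn conjugate to $(T^c)^{-1}$, and composing the two conjugacies yields $T$ conjugate to $(T^c)^{-1}$, i.e., $T$ is $c$-real.

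For the converse, I would appeal to \thmref{mainthm1}. The task is to show that $c$-reality of $T$ translates into exactly the elementary divisor condition appearing there. The key observation is: if $p(x)^k$ is an elementary divisor of $T$, then $(p^{\ast}(x))^k$ is an elementary divisor of $(T^c)^{-1}$ with the same multiplicity. Indeed, taking entrywise $c$ of a rational canonical form replaces each companion block by the companion block for $p^c(x)^k$, so the elementary divisors of $T^c$ are the $c$-conjugates of those of $T$; inversion then sends an elementary divisor $q(x)^k$ (with $q(0) \neq 0$) to $(q(0)^{-1} x^{\deg q} q(x^{-1}))^k$, the monic polynomial whose roots are the reciprocals of those of $q$. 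Composing these operations sends $p(x)^k$ to $((p^c(0))^{-1} x^{\deg p} p^c(x^{-1}))^k = (p^{\ast}(x))^k$ by the definition of $p^{\ast}$. The cases $p(x) = x \pm 1$ require a direct verification, which matches the extended definition of self-dual from the introduction. Granted this, the hypothesis of $c$-reality (that $T$ and $(T^c)^{-1}$ have identical elementary divisors with the same multiplicities) becomes exactly the closure of the multiset of elementary divisors of $T$ under $\ast$, which is the hypothesis of \thmref{mainthm1}.

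The main obstacle is the bookkeeping in the reverse direction: verifying that the combined operation $A \mapsto (A^c)^{-1}$ acts on elementary divisors by $q(x)^k \mapsto q^{\ast}(x)^k$ at the level of rational canonical form, and handling the boundary cases $p(x) = x \pm 1$ cleanly so that the paper's extended notion of self-dual aligns with the hypothesis of \thmref{mainthm1}.
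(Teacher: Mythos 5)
Your proposal is correct, but it takes a partly different route from the paper, and the difference is worth noting. The paper derives both implications from two equivalences with the elementary divisor condition: \thmref{mainthm1} (invariant form $\Leftrightarrow$ every elementary divisor is self-dual or paired with its dual of equal multiplicity) and \lemref{lem3} ($c$-reality $\Leftrightarrow$ the same condition), the latter proved constructively by exhibiting explicit conjugating matrices $S$ on each indecomposable or dual-paired summand. Your converse direction ($c$-real $\Rightarrow$ form) coincides with the paper's: it is exactly the necessity half of \lemref{lem3} combined with the sufficiency half of \thmref{mainthm1}, and your bookkeeping that $A \mapsto (A^c)^{-1}$ sends an elementary divisor $q(x)^k$ to $q^{\ast}(x)^k$ (with the $x \pm 1$ cases checked to be genuinely self-dual, consistent with the extended convention) is a more explicit version of what the paper states tersely via ``the structure theory of linear maps.'' Your forward direction, however, is genuinely different and more elementary: from $(T^c)^t H T = H$ with $H$ invertible you get $T = H^{-1}\bigl((T^c)^{-1}\bigr)^t H$, and the standard fact that every matrix over a field is similar to its transpose then gives $T \sim (T^c)^{-1}$ directly, bypassing the necessity half of \thmref{mainthm1} (which the paper imports from the literature) and the explicit constructions of \lemref{lem3}. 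What the shortcut loses is the extra information the paper's constructive route provides: the conjugating matrices $S$ built in \lemref{lem3} are involutions, which is what yields the paper's corollary that $c$-real elements of ${\rm GL}_n(\F)$ are strongly $c$-real; your transpose-similarity argument does not by itself produce an involutive conjugator.
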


First we shall prove the following lemma. 
\begin{lemma} \label{lem3} Let $\F$ be a field with involution $c$ and $T \in{\rm GL}_k(\F)$, $k \geq 2$. Then $T$ is $c$-real if and only if an elementary divisor of $T$ is either self-dual, or its dual is also an elementary divisor with the same multiplicity.
\end{lemma}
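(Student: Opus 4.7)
The plan is to exploit the rational canonical form over $\F$, using the standard fact that two elements of ${\rm GL}_k(\F)$ are conjugate if and only if they share the same multiset of (elementary divisor, multiplicity) pairs. Thus the lemma reduces to identifying the elementary divisors of $(T^c)^{-1}$ in terms of those of $T$.

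The key claim to establish is: \emph{if $p(x)^{k}$ is an elementary divisor of $T$ of multiplicity $m$, then $p^{\ast}(x)^{k}$ is an elementary divisor of $(T^c)^{-1}$ with the same multiplicity $m$, and this correspondence is a bijection between the respective multisets of elementary divisors.} I would prove this in two steps. First, fix a basis representing $T$ by a matrix $A$, so that $T^c$ is represented by $A^c$. Since $c$ extends to a ring automorphism of $\F[x]$, the elementary divisors of $A^c$ are obtained from those of $A$ by applying $c$ coefficientwise, with multiplicities unchanged. Second, for an invertible cyclic operator $B$ on $W \cong \F[x]/(q(x)^{k})$ with $q(0)\neq 0$, the map $B^{-1}$ is again cyclic on $W$, and its minimal polynomial equals the reciprocal power $\bigl(q(0)^{-1}\, x^{\deg q}\, q(x^{-1})\bigr)^{k}$; indeed $q(B)=0$ together with invertibility of $B$ force this reciprocal to annihilate $B^{-1}$, while the exponent cannot drop below $k$ because $q^{k-1}(B)\neq 0$ and $B^{-(k-1)\deg q}$ is invertible. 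Composing the two steps and noting that, by the definition $p^{\ast}(x)=\bigl(p(0)^c\bigr)^{-1} x^{\deg p}\, p^c(x^{-1})$, the polynomial $p^{\ast}$ is exactly the reciprocal of $p^c$, yields the claim.

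With this identification in hand, the lemma is immediate: $T$ is $c$-real iff $T$ and $(T^c)^{-1}$ are conjugate in ${\rm GL}_k(\F)$ iff their multisets of (elementary divisor, multiplicity) pairs coincide iff the multiset of elementary divisors of $T$ is invariant under the involution $p(x)^{k}\mapsto p^{\ast}(x)^{k}$. This invariance is precisely the stated condition: each elementary divisor is either self-dual or its dual is also an elementary divisor with the same multiplicity.

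The main obstacle is the careful bookkeeping in the second step above: one must verify that the reciprocal construction $q \mapsto \bigl(q(0)^{-1}\, x^{\deg q}\, q(x^{-1})\bigr)$ is multiplicative, sends monic irreducibles with nonzero constant term to monic irreducibles, is an involution, and commutes with taking powers. These facts are elementary but are essential to ensure that the induced bijection on elementary divisors preserves both irreducibility and multiplicity. Once they are in place, the rest of the argument is purely formal from the structure theorem for finitely generated modules over $\F[x]$.
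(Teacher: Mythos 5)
Your proof is correct, but it takes a genuinely different route from the paper's, at least for the sufficiency direction. The paper also reduces everything to elementary divisors, but it proves that the stated divisor condition implies $c$-reality by a case analysis on the summands of the primary decomposition: for a self-dual elementary divisor it writes $T$ as a companion matrix and exhibits the explicit ``flip'' $S(e_i)=e_{k-i}$ conjugating $T$ to $(T^c)^{-1}$; for a dual pair $\V_g\oplus\V_{g^\ast}$ it uses the block swap $\begin{pmatrix} O & I\\ I & O\end{pmatrix}$; and for $(x-1)^d$ it notes $T^c=T$ and invokes reality of unipotent Jordan blocks. You instead prove the single structural fact that the elementary divisors of $(T^c)^{-1}$ are exactly the duals $(p^\ast)^{k}$ of those of $T$, with multiplicities preserved, and then both directions of the lemma drop out of the similarity criterion at once; this is cleaner and more uniform (no separate treatment of $(x\pm1)^k$, which your reciprocal construction handles automatically and consistently with the paper's convention). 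What the paper's construction buys that yours does not is the explicit conjugating matrices, which are involutions; this is what the paper later uses to deduce the corollary that $c$-real elements of ${\rm GL}_n(\F)$ are strongly $c$-real, a statement your argument leaves untouched. One small slip in your sketch: on $W\cong\F[x]/(q(x)^k)$ the minimal polynomial of $B$ is $q^k$, not $q$, so the annihilation step should read that $q(B)^k=0$ together with invertibility of $B$ forces $\bigl(q(0)^{-1}B^{-\deg q}q(B)\bigr)^k=0$; your lower-bound argument via $q(B)^{k-1}\neq 0$ is exactly right, and combined with irreducibility of the reciprocal (which you correctly flag as a needed bookkeeping fact) it pins the minimal polynomial down to the reciprocal power.
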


\begin{proof} Let $\V$ be a vector space of dimension $n$ over $\F$ and $T: \V \to \V$ be an invertible linear map. 
Suppose,  $T$ is $c$-real, that is $T$ is conjugate to $(T^c)^{-1}$. By the structure theory of linear maps, they have the same elementary divisors, and hence the same primary decomposition, see \cite{jac}. $T$ is conjugate to $(T^c)^{-1}$ on each of the summands in this decomposition,  and that implies the necessary condition.

 Suppose, an elementary  divisor of $T$ is either  self-dual or, its dual is also an elementary divisor.  Then,  $\V$ has the primary decomposition as in  \eqnref{x}. Hence, it is sufficient to prove the lemma on each of the summands in the decomposition. So, without loss of generality, we assume $\V$ to be one of these summands and prove that $T$ is $c$-real. 

\medskip Case (i): Suppose $\V=\V_f$, for some self-dual monic polynomial $f(x)=p(x)^d$ that has no root $1$.  Then $\chi_T(x)=m_T(x)=p(x)^d=\sum_{i=0}^{ k }d_ix^{i}$ belongs to $ F[x]$, $k=\dim \V$. As $T$ is self-dual then $d_0d_0^c=1$, $d_i d_0^c=d^{c}_{ k -i}, ~ 1\leq i\leq  k -1$. The set 
$$B=\{e_0=v,~ e_1=Tv, \ldots,~ e_{ k-1 } =T ^{ k -1}v\}$$
form a basis for $\V$ and $T^{ k }v=-\sum_{i=0}^{k-1} d_{i}T^{i}v$. The linear transformation  $T$ with respect to the basis $B$ is given by: $T(e_i)=e_{i+1}$, and this is represented by the matrix:  
 $$T=\begin{pmatrix}
0 & 0 & 0 & \ldots & 0 &  -d_0 \\1& 0 & 0 & \ldots & 0  & -d_1\\ 0 &  1& 0& \ldots & 0 &  -d_2 \\ \vdots & & \ddots& \ddots &  & \vdots\\ 0 & 0 & 0 & & 0 & -d_{ k -2}\\0 & 0 & 0 & & 1 & -d_{ k -1} \end{pmatrix}.  $$
It is easy to see that
$$(T^c)^{-1}=\begin{pmatrix}   

-\displaystyle \frac{d_{1}^c}{d_0^c}  & 1 & 0 & \ldots & 0 & 0  \\ - \displaystyle\frac{d_2^c}{d_0^c} & 0 & 1 & \ldots& 0 & 0  \\  \vdots & \ &   \ddots& \ddots & & \vdots   \\  -\displaystyle\frac{d_{k-1}^c}{d_0^c} & 0 & 0 & \ldots & 0&  1 \\
-\displaystyle\frac{1}{d_0^c} & 0 & 0 & \ldots&  0 & 0 \end{pmatrix}.  $$
Now the transformation given by $S(e_i)=e_{k-i}$ conjugates $T$ to $(T^c)^{-1}$. In matrix representation with respect to the basis $B$,  
$$S=\begin{pmatrix} 0 & 0 & \ldots &0 & 0 & 1\\0 & 0  & \ldots &0 & 1  & 0\\ 0 &  0& \ldots & 1 & 0 &  0 \\ \vdots & \vdots & \vdots& \vdots & \vdots & \vdots\\ 1 & 0  & \ldots&0  & 0 & 0 \end{pmatrix}.  $$

Case (ii): Suppose $\V=\U_{g}= \V_{g} \oplus \V_{g^{\ast}}$,  where $g(x)$, $g^{\ast}(x)$ are powers of an irreducible polynomial over $\F$, and are dual to each other.  In this case, with respect to the cyclic basis $B$, if $\t=T|_{\V_g}$, then $T|_{\V_{g^{\ast}}}=(\t^c)^{-1}$ with respect to the dual basis $B^{\ast}$.  Thus, with respect to the basis $\{B, B^{\ast}\}$, $T$ has the representation:
$$T=\begin{pmatrix} \t & O \\ O & (\t^c)^{-1} \end{pmatrix},$$
where each block has order $\frac{k}{2}$. Then 
$$(T^c)^{-1}=\begin{pmatrix}  (\t^c)^{-1} & O \\ O & \t \end{pmatrix}.$$
Now the matrix $STS^{-1}=(T^c)^{-1}$, where 
$$S=\begin{pmatrix}  O_{\frac{k}{2}} & I_{\frac{k} {2} }\\ I_{\frac{k}{2}} & O_{\frac{k}{2}} \end{pmatrix},$$
$I_{\frac{k}{2}}$ denote the identity matrix of order $\frac{k}{2}$. 

Case (iii): Let $\V=\V_p$, where $p(x)=(x - 1)^d$. Then $T^c=T$ and hence, by the theory of Jordan canonical form it follows that $T$ is real, see \cite[Chapter 4]{osho}. 

This completes the proof. 
\end{proof}

\subsection{Proof of \thmref{thm1}}  Suppose $c$ is non-trivial. Then it follows from \thmref{mainthm1} that $\V$ admits a $T$-invariant hermitian form if and only if an elementary divisor of $T$ is either self-dual or its dual is also an elementary divisor. In view of \lemref{lem3}, this is equivalent to the $c$-reality of $T$. This proves the theorem. Hence, \thmref{mainthm} follows. 

\begin{remark} 
The classical notion of real elements in a group is closely related to the notion of strongly real elements. An element $g$ in a group $G$ is \emph{strongly real} if it is a product of two involutions in $G$, see \cite{osho}. We can define strong $c$-reality in a $c$-invariant linear group $G$ as follows: an element $g$ is called \emph{strongly $c$-real} if $g$ is conjugate to $(g^c)^{-1}$ by an involution in $G$, i.e. there is an involution $i \in G$ such that $i g i^{-1}=(g^c)^{-1}$. When $c$ is identity, a strongly $c$-real element is strongly real. The proof of \lemref{lem3} shows that:

\begin{cor} Let $\F$ be a field with involution $c$. 
An element of ${\rm GL}_n(\F)$ is $c$-real if and only if it is strongly $c$-real. 
\end{cor}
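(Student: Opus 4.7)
The forward implication is trivial: a strongly $c$-real element is by definition $c$-real. So the plan focuses on the converse, where given a $c$-real $g \in {\rm GL}_n(\F)$ one must produce an involution $i$ conjugating $g$ to $(g^c)^{-1}$.

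The key observation is that nothing new needs to be proved; one simply inspects the proof of \lemref{lem3} and checks that the conjugating matrix $S$ built there is already an involution. By \lemref{lem3}, since $g$ is $c$-real, its elementary divisors satisfy the self-duality/pairing condition, and the vector space decomposes according to \eqnref{x}. A block-diagonal matrix whose diagonal blocks are involutions is itself an involution, so it suffices to verify the claim on each indecomposable summand. The summands fall into exactly the three families treated as Cases (i)--(iii) in the proof of \lemref{lem3}.

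In Case (i) the conjugator is $S(e_i) = e_{k-i}$, i.e.\ the anti-diagonal permutation matrix, which clearly satisfies $S^2 = I$. In Case (ii) one has $S = \begin{pmatrix} O_{k/2} & I_{k/2} \\ I_{k/2} & O_{k/2} \end{pmatrix}$, and again $S^2 = I$ on the nose. In Case (iii) one has $T^c = T$, so $(T^c)^{-1} = T^{-1}$ and the problem reduces to the classical fact that a unipotent linear map is strongly real, which is exactly what \cite[Chapter 4]{osho} provides.

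I do not expect any real obstacle: the corollary is essentially a bookkeeping remark about the constructions already carried out in \lemref{lem3}. The only point warranting a second glance is Case (iii), where one should confirm that the ``unipotent $\Rightarrow$ strongly real'' statement cited from \cite{osho} is valid in arbitrary characteristic (in particular for characteristic two, where the unipotent analysis of \lemref{lem2} lives); this is standard Jordan-form material and requires no additional work.
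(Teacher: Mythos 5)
Your proposal is correct and is essentially the paper's own argument: the paper derives the corollary precisely by observing that the conjugators built in the proof of Lemma~\ref{lem3} (the anti-diagonal $S$ in Case (i), the block swap in Case (ii), and the strong reversibility of unipotent maps from \cite{osho} in Case (iii)) are involutions, assembled block-diagonally over the decomposition \eqnref{x}. Your caveat about Case (iii) in characteristic two is reasonable and is covered by the same reference the paper cites.
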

\end{remark}

\medskip 

\subsection*{Acknowledgement} 
The authors are thankful to Dipendra Prasad, Anupam Singh and Ian Short for their comments on this work.  The second-named author \hbox{acknowledges} a UGC non-NET {fellowship} from Jadavpur University. It is a pleasure to thank the referee for helpful remarks.

\end{document}